\theoremstyle{plain}
\newtheorem{theorem}{Theorem}[section]
\newtheorem{lemma}[theorem]{Lemma}
\theoremstyle{definition}
\newcommand*\samethanks[1][\value{footnote}]{\footnotemark[#1]}
\title{Note on Long Paths in Eulerian Digraphs}
\author{
	Charlotte Knierim\thanks{Department of Computer Science, ETH Zurich, Switzerland\newline $\{$cknierim$\vert$larcherm$\vert$anders.martinsson$\}$@inf.ethz.ch\newline
	}
	\and
	Maxime Larcher\samethanks[1]
	\and Anders Martinsson\samethanks[1]}
\newcommand{\din}{\ensuremath{\mathrm{d}^-}}
\newcommand{\dout}{\ensuremath{\mathrm{d}^+}}
\newcommand{\wout}{\ensuremath{\mathrm{w}^+}}
\newcommand{\weight}{\ensuremath{\mathrm{w}}}
\begin{document}
\maketitle
\begin{abstract}

Long paths and cycles in Eulerian digraphs have received a lot of attention recently. In this short note, we show how to use methods from \cite{KLMN21} to find paths of length $d/(\log d+1)$ in Eulerian digraphs with average degree $d$, improving  the recent result of $\Omega(d^{1/2+1/40})$. Our result is optimal up to at most a logarithmic factor.

\end{abstract}
\section{Introduction}

One of the fundamental questions of extremal combinatorics is to determine how `large' a graph $G$ may be before it needs to contain certain graphs as subgraphs. A famous result in this topic is a theorem of Erd\H{o}s and Gallai~\cite{EG59}, which states that any graph of average degree $d$ contains a path (resp.\ a cycle) of length linear in $d$.

This same question turns out to be more difficult for digraphs, even in the apparent simple case of a path. In all generality, one cannot hope for a statement similar to that of Erd\H{o}s and Gallai, as there are digraphs --- for instance the complete bipartite graph $K_{\frac{n}{2}, \frac{n}{2}}$ in which all edges are oriented in the same direction ---  which have high average degree but only paths of length $1$. Part of the problem hence resides in determining for which class of digraphs one may hope to prove a counterpart to Erd\H{o}s and Gallai's theorem. For this, Bollob\'as and Scott~\cite{BS96} conjectured that all Eulerian digraphs with average degree\footnote{Throughout this note, average degree stands for \emph{average out-degree}. In particular, for an $n$-vertex, $m$-edge digraph we have $d = m/n$.} $d$ contain a path of length $\Omega(d)$. A first partial answer was given by Huang, Ma, Shapira, Sudakov and Yuster~\cite{HMSSY13} who proved a lower bound of $d^{1/2}$. Very recently Janzer, Sudakov and Tomon~\cite{JST21} improved this bound to $\Omega(d^{1/2+1/40})$. We push this to $d/ (\log d+1)$. As any regular tournament on $2k+1$ vertices is Eulerian with average degree $k$ and longest path of length $2k$ (in fact, as is easily shown by induction, \emph{any} tournament has a Hamilton path), our result is optimal up to at most a logarithmic factor.
\begin{theorem}
\label{thm:longpath}
Let $G$ be an Eulerian digraph with average degree $d$. Then $G$ has a path of length at least ${d}/(\log d + 1)$.
\end{theorem}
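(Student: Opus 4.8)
The plan is to bound the number of edges $m$ in terms of $n$, the length $\ell$ of a longest path, and the out-degree sequence, and then to optimise the resulting estimate by convexity.

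\smallskip
\noindent\emph{Reductions and the elementary fact.} First I would reduce to the strongly connected case. Since $G$ is Eulerian, every weakly connected component is Eulerian, and a weakly connected Eulerian digraph is strongly connected (it admits an Eulerian circuit, which visits every vertex). Passing to a weakly connected component $C$ maximising $|E(C)|/|V(C)| \ge d$ only increases the average degree and preserves the Eulerian hypothesis, so I may assume $G$ is strongly connected of average degree at least $d$. I would then record the elementary fact that a digraph with $\minoutdeg \ge k$ contains a path of length at least $k$: taking a longest path $v_0 \cdots v_\ell$, all out-neighbours of $v_\ell$ lie on it, and the earliest one closes a cycle, forcing $\ell \ge k$. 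Thus the entire difficulty is that, unlike for undirected graphs, average out-degree $d$ does \emph{not} guarantee a subdigraph of minimum out-degree $\Omega(d)$.

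\smallskip
\noindent\emph{Why naive peeling fails, and where the Eulerian hypothesis must enter.} Let $\ell$ be the length of a longest path. Every subdigraph also has all paths of length at most $\ell$, so by the elementary fact every subdigraph contains a vertex of out-degree at most $\ell$; hence I can peel vertices $v_1,\dots,v_n$, each of current out-degree at most $\ell$. Writing $F$ and $B$ for the numbers of forward and backward edges in this ordering, the peeling yields $F \le \ell n$ but leaves $B$ uncontrolled. Attempting to rescue this with the Eulerian identity $\din(v)=\dout(v)$, a direct accounting shows that the balance merely reproduces $F \le \ell n$ and cannot by itself bound $B$. This is precisely the point at which a purely combinatorial peeling breaks down and a \emph{weighted} argument — which is what introduces the logarithmic loss — becomes necessary.

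\smallskip
\noindent\emph{The weighting method (the crux).} Following the method of \cite{KLMN21}, I would assign each vertex the weight $\weight(v) := \dout(v) = \din(v)$, so that $\sum_v \weight(v) = m$, and build a long path greedily while maintaining, for each vertex, its remaining out-degree $\drem$ and remaining out-weight $\wrem$ relative to the already-used vertices. The Eulerian balance guarantees that while the path is short the remaining out-weight at its endpoint stays large, so the greedy can always advance to an out-neighbour retaining a constant fraction of the current weight; since the weight at any vertex $v$ can halve at most $\log_2 \dout(v) + 1$ times, this should give the key estimate
\[ m \;\le\; \ell \sum_{v} \big(\log_2 \dout(v) + 1\big). \]
Establishing this inequality — in particular, arranging that the weighted greedy can continue despite vertices already being used, which is exactly where the Eulerian hypothesis is indispensable — is the main obstacle.

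\smallskip
\noindent\emph{Conclusion by convexity.} Finally, since $\log_2$ is concave and $\sum_v \dout(v) = m = nd$, Jensen's inequality gives $\sum_v \log_2 \dout(v) \le n\log_2 d$, whence $m \le \ell\,(n\log_2 d + n) = n\ell(\log_2 d + 1)$. Dividing by $n$ yields $d \le \ell(\log_2 d + 1)$, that is $\ell \ge d/(\log_2 d + 1)$, which is the claimed bound. I expect the only delicate point beyond the weighting step to be the base of the logarithm, which is absorbed into the stated factor $\log d + 1$.
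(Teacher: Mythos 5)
Your reductions, your final convexity step, and your identification of the key inequality $m \le \ell \sum_v (\log_2 \dout(v)+1)$ are all correct and match the shape of the paper's argument (the paper phrases it as a path decomposition: $G$ decomposes into at most $\sum_{v}\sum_{i=1}^{\dout(v)} 1/i \le n(\log d + 1)$ directed paths, and pigeonhole on the $nd$ edges then gives the theorem). But there is a genuine gap exactly where you flag ``the main obstacle'': the key inequality is asserted, not proven, and the mechanism you sketch for it does not work as stated. A single weighted greedy path cannot ``always advance'': once the path is maximal, every out-neighbour of its endpoint already lies on it, and no amount of Eulerian bookkeeping lets you extend it; the logarithmic loss cannot be extracted from halving a weight along one path.

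What is actually needed --- and what the paper supplies --- is a two-level argument. First, a lemma (essentially Bollob\'as--Scott): if $G$ is Eulerian, or Eulerian except for one vertex $r$ with $\dout(r)=\din(r)-1$ and one vertex $s$ with $\dout(s)=\din(s)+1$, and if every vertex other than $r$ has total outgoing edge weight at least $1$, then $G$ contains a path of weight at least $1$ ending at $r$. This is proved by a backwards DFS from $r$ that always explores the heaviest incoming edge first: the last vertex $u$ discovered has all of its out-neighbours on the DFS path from $u$ to $r$, and each such out-neighbour contributes to that path an edge at least as heavy as its edge from $u$, so the path has weight at least $\wout(u)\ge 1$. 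Second, one iterates: give $G_t$ the weighting $\weight(vw)=1/\dout_{G_t}(v)$ (so every vertex of positive out-degree has out-weight exactly $1$), extract a path of weight at least $1$ ending at the current deficient vertex $r_t$, and check that removing it preserves the near-Eulerian degree condition for the next round. Double counting the total weight of all extracted paths gives $T\le\sum_v\sum_{i=1}^{\dout(v)}1/i\le\sum_v(\log\dout(v)+1)$, after which your Jensen step finishes the proof. Without this extraction lemma and the decomposition (rather than single-path) viewpoint, the inequality you rely on remains unproven.
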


To attain this bound, we look at a related problem, the problem of determining the minimum number of paths into which $G$ may be decomposed. The connection is the following: if one can decompose $G$ into few paths, then one of them needs be long; if one can find long paths in graphs, then one may sequentially take out long paths from $G$ to find a small decomposition. In~\cite{KLMN21}, the authors together with A.\ Noever worked towards a conjecture of Haj\'os and proved that any Eulerian digraph may be decomposed into $O( n \log \Delta)$ cycles, where $\Delta$ denotes the maximum degree of the graph. As noted by Janzer, Sudakov and Tomon, this implies a lower bound of $ \Omega( d / \log \Delta )$ on the length of a path, which is below their bound when $\Delta$ is (very) large compared to $d$. 

In this note, we adapt our ideas from~\cite{KLMN21} to paths. We give an upper bound on the number of paths required to decompose an Eulerian digraph. From this, we improve the bound of Janzer, Sudakov and Tomon.

\begin{theorem}
Let $G$ be an $n$-vertex Eulerian digraph with average degree $d$. Then we can decompose $G$ into at most $n(\log d + 1)$ directed paths.
\label{thrm:main}
\end{theorem}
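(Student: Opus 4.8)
The plan is to prove a sharper, per-vertex statement and then average. Since $G$ is Eulerian, write $\dout(v)=\din(v)=:k_v$ for the common in/out-degree of each vertex. In any decomposition of $G$ into directed paths, let $b(v)$ be the number of paths that \emph{start} at $v$. Each out-edge at $v$ is either the first edge of a path started at $v$ or a continuation of a path passing through $v$; counting in-edges symmetrically and using $\dout(v)=\din(v)$ shows that $b(v)$ also equals the number of paths that \emph{end} at $v$. Since every path starts at a unique vertex, the total number of paths is $\sum_v b(v)$. I would construct a decomposition with $b(v)\le \lfloor\log_2 k_v\rfloor+1$ for every $v$ (discarding the harmless isolated vertices, for which $b(v)=0$). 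Granting this, concavity of $\log$ gives $\sum_v\log_2 k_v\le n\log_2\!\big(\tfrac1n\sum_v k_v\big)=n\log_2 d$, hence $\sum_v b(v)\le n\log_2 d+n=n(\log d+1)$, which is the theorem.

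To build such a decomposition I would use recursive balanced splitting. First I would split the arc set of $G$ into two edge-disjoint Eulerian subgraphs $G_1,G_2$ with $\dout_{G_i}(v)\in\{\lceil k_v/2\rceil,\lfloor k_v/2\rfloor\}$ at every $v$. Such a split exists: form the bipartite (multi)graph $B$ on stub-classes $\{v_{\mathrm{out}}\}\cup\{v_{\mathrm{in}}\}$ with one edge $u_{\mathrm{out}}w_{\mathrm{in}}$ per arc $u\to w$, and take a balanced $2$-colouring of $B$ (splitting the edges at every vertex as evenly as possible), which exists by the classical Eulerian-orientation argument; the two colour classes become $G_1$ and $G_2$, and a little care at odd-degree vertices makes each $G_i$ Eulerian. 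Recursing on $G_1$ and $G_2$ produces path decompositions of each, which I then merge by repeatedly concatenating a path of one part ending at a vertex $v$ with a path of the other part starting at $v$.

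The accounting is per vertex. At the base of the recursion all degrees are at most $1$, so each piece is a disjoint union of directed cycles; opening each cycle at a single vertex yields one path and contributes $1$ to $b$ at exactly one vertex, matching $\lfloor\log_2 1\rfloor+1=1$. At a merge, the two sub-decompositions together contribute at most $b_1(v)+b_2(v)$ starts at $v$, and each concatenation performed at $v$ removes one start (and one end). The aim is to show that all but at most one of the possible concatenations at each $v$ can be performed, which roughly halves the number of starts and sustains the inductive bound $b(v)\le\lfloor\log_2 k_v\rfloor+1$: unwinding the recursion, a vertex of degree $k_v$ is active in only $\lfloor\log_2 k_v\rfloor+1$ levels, and at most one unmergeable start survives per level.

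The hard part will be the merging step: a concatenation $P\cdot Q$ at $v$ is legal only if $P$ and $Q$ share no vertex other than $v$, and one must also avoid closing a path into a cycle. Guaranteeing that enough legal concatenations exist at every vertex and every level — so that at most one start survives per vertex per level — is the crux, and is exactly where the methods of \cite{KLMN21} should enter. I expect the cleanest route is to replace explicit merging by a weighting argument: assign remaining in/out weights $\win(v),\wout(v)$, process the arcs while maintaining a logarithmic potential, and charge each newly created endpoint to a halving of the relevant weight at $v$, so that at most $\log_2 k_v$ endpoints are ever charged to $v$. The remaining technicalities — odd degrees in the balanced split, the additive $+1$, and degree-$0$ vertices — are routine and are absorbed into the $+1$ term and into the final application of Jensen's inequality.
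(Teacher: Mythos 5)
There is a genuine gap, and it sits exactly where you flagged it: the merging step. Your whole accounting rests on the claim that at each level of the recursion, all but at most one of the possible concatenations at each vertex $v$ can be performed, so that the number of path-starts at $v$ roughly halves per level. This is not a routine step and nothing in the proposal establishes it. Concatenating a path of $G_1$ ending at $v$ with a path of $G_2$ starting at $v$ generally produces a walk that revisits vertices, and the constraint is global: whether a given concatenation at $v$ is legal depends on which concatenations were performed at other vertices, and on not closing a path into a cycle. There is no obvious greedy or local argument that leaves at most one unmerged start per vertex per level, and your fallback (``replace explicit merging by a weighting argument \dots charge each newly created endpoint to a halving of the relevant weight'') is a description of a hoped-for proof rather than a proof. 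A secondary, more repairable issue: the balanced $2$-colouring of the bipartite incidence multigraph controls $\dout_{G_i}(v)$ and $\din_{G_i}(v)$ separately (each is within one of $k_v/2$), but does not force $\dout_{G_i}(v)=\din_{G_i}(v)$, so the two parts need not be Eulerian; handling the resulting imbalanced vertices is precisely the kind of bookkeeping that can leak extra endpoints at every level. (Also, your route would naturally give $\log_2$ where the paper's bound uses $\sum_{i=1}^k 1/i\le\log k+1$, i.e.\ the natural logarithm, so the constants do not literally match the statement.)

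The paper's proof avoids all of this and never merges anything. It extracts paths one at a time: at step $t$ it puts weight $1/\dout_{G_t}(v)$ on every out-edge of $v$ in the current graph $G_t$, and invokes a lemma (a backwards DFS in the spirit of Bollob\'as--Scott) guaranteeing a path of weight at least $1$ ending at the unique ``deficient'' vertex $r_t$; removing that path preserves the near-Eulerian degree condition. The total weight ever available at $v$ over the whole process is $\sum_{i=1}^{\dout_G(v)} 1/i\le\log \dout_G(v)+1$, so double counting gives $T\le\sum_v(\log\dout_G(v)+1)\le n(\log d+1)$ by Jensen. This realises the same per-vertex logarithmic budget you were aiming for, but the ``heavy path exists'' lemma does the work that your merging step was supposed to do. If you want to salvage your outline, the piece you must supply is a proof of the halving claim at merge time --- and that is the hard theorem, not a technicality.
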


Theorem~\ref{thm:longpath} is a direct consequence of Theorem~\ref{thrm:main}.
The core ideas of our proofs are based on a result by Bollob\'as and Scott~\cite{BS96} (Corollary 2) and the `Cycle Removal Lemma' from~\cite{KLMN21} (Lemma 2.2). 
\section{Proof}
The following lemma is a combination of Theorem 1 and Corollary 2 from \cite{BS96}. Note that the graph we look at is almost strongly connected - we know that all components of an Eulerian digraph are strongly connected and the graph we consider is at most one edge away from being Eulerian. 
\begin{lemma}
\label{lemma:find_path}
%Let $G=(V,E)$ be digraph and $u\in V$ be a \AM{distinguished?} vertex of this digraph such that the following holds.
Let $G=(V,E)$ be a digraph and let $s,r\in V$ be vertices in $G$. Let $\weight:E\rightarrow \mathbb{R}_{\geq 0}$ be an edge-weighting of $G$. For a vertex $v\in V$, we let $\wout(v)$ denote the total weight of outgoing edges from $v$. If the following statements hold:
\begin{enumerate}
    
    \item Either $G$ is Eulerian or we have $\dout(r)=\din(r)-1$, $ \dout(s)= \din(s)+1$, and $\dout(v) = \din (v)$ for all $v \in V\setminus\{s,r\}$, \label{lem:itm:cond1}
    %\item  $\dout(u)$ is equal to $\din(u)$ or $\din(u)-1$, %$\dout(u)\ge \din(u)-1$,
    %\item for all but at most one vertex $v \in V\setminus\{u\}$ we have $\dout(v) = \din (v)$,
    \item for all $ v\in V\setminus\{r\}$ we have $\wout (v)\ge 1$,
\end{enumerate}
then $G$ has a path of weight at least 1 that ends in $r$.
\end{lemma}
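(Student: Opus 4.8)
The plan is to reduce to a clean weighted statement and then run an extremal argument on the paths ending at $r$. First I would remove the dichotomy in condition~\ref{lem:itm:cond1}. In the non-Eulerian case the only degree imbalances are $+1$ at $s$ and $-1$ at $r$, so adding a virtual edge $e_0=(r,s)$ of weight $0$ turns $G$ into an Eulerian digraph $G'$; moreover, since the imbalances over any weak component must sum to $0$, the vertices $s$ and $r$ already lie in the same weak component, and I may restrict to the weak component of $r$, which is (near-)Eulerian. A weakly connected Eulerian digraph is strongly connected, so $G'$ is strongly connected, and every $v\neq r$ reaches $r$ by a directed path lying inside $G$ (such a path cannot use $e_0$, as that would revisit $r$). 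This lets me treat both cases uniformly: every vertex reaches $r$, and $\wout(v)\ge 1$ for every $v\neq r$.

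With this in hand, define $f(v)$ to be the maximum weight of a directed path from $v$ to $r$, with $f(r)=0$; by the reachability just established $f(v)$ is finite and nonnegative, and the goal becomes $\max_v f(v)\ge 1$. Suppose instead that $f(v)<1$ for all $v$. The mechanism I would try to exploit is that each vertex must ``spend'' its out-weight budget on the way to $r$: for $v\neq r$ and an out-edge $(v,u)$ whose heaviest continuation to $r$ avoids $v$, prepending the edge gives $f(v)\ge \weight(v,u)+f(u)$. As $\wout(v)\ge 1$, a suitable combination of these inequalities over the out-edges of a well-chosen $v$ should drive some $f$-value up to $1$, contradicting the assumption.

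The hard part will be exactly the vertex-disjointness concealed in the phrase ``avoids $v$'': the heaviest path from an out-neighbour $u$ to $r$ may be forced back through $v$, so that prepending $(v,u)$ no longer yields a simple path. This is the feature that separates the weighted directed problem from its easy unweighted analogues, and it is where I expect to lean on the strongly connected Eulerian structure. Concretely, I would argue on a maximum-weight path $P$ ending at $r$: the out-edges of its first vertex either extend $P$ or close cycles with it, and the Eulerian cycle-decomposition---in the spirit of the Cycle Removal Lemma of~\cite{KLMN21}---should let me reroute weight through $r$ without repeating a vertex. Since this is precisely the setting of Bondy's weighted-path conjecture, the cleanest route is to package the argument as an application of Bollob\'as and Scott~\cite{BS96}: their Theorem~1 supplies the weight bound under the out-weight hypothesis and their Corollary~2 delivers the path terminating at the prescribed vertex $r$, with the virtual-edge reduction above bridging the Eulerian and near-Eulerian cases.
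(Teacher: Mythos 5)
Your reduction to the Eulerian case via a weight-zero virtual edge $(r,s)$, and the conclusion that every vertex reaches $r$ by a directed path inside $G$, is sound and matches what the paper does. The problem is that the main step --- actually exhibiting a \emph{simple} path of weight at least $1$ ending at $r$ --- is never carried out. Your extremal argument via $f(v)$ stalls at exactly the point you flag yourself: the inequality $f(v)\ge \weight(v,u)+f(u)$ is only available when the optimal path from $u$ avoids $v$, and you give no mechanism for rerouting around $v$, nor for combining the out-edges of a single vertex so that their weights (which sum to at least $1$) all get charged to edges of one common simple path. Falling back on ``apply Theorem~1 and Corollary~2 of~\cite{BS96}'' does not close the gap: that is where the lemma comes from in the first place (the paper says as much before stating it), so as a blind proof this amounts to asserting that the statement is known, and you do not verify that the hypotheses of those results are met after your modification. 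In short, the attempt contains the easy half of the argument and an accurate diagnosis of the hard half, but not a proof of it.

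The device the paper uses to resolve precisely this difficulty is worth knowing: run a \emph{backwards} DFS from $r$ (traversing edges from target to source), always exploring the heaviest available in-edge first, and let $u$ be the last vertex visited. Then every out-neighbour of $u$ must lie on the tree path $P$ from $u$ to $r$ --- otherwise $u$ would have been discovered earlier, as a child of that out-neighbour --- and the heaviest-first rule guarantees that for each such out-neighbour $w$, the edge of $P$ entering $w$ weighs at least $\weight(u,w)$, since $(u,w)$ was still unexplored when that edge was selected. Summing over the out-neighbours of $u$ charges all of $\wout(u)\ge 1$ to distinct edges of the single simple path $P$. This is exactly the ``suitable combination of these inequalities over the out-edges of a well-chosen $v$'' that your sketch calls for, with the well-chosen vertex produced by the greedy DFS rather than by an extremal condition on $f$.
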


\begin{proof}
For the readability of this proof, we assume that $G$ is weakly connected. If this was not the case, all the conditions hold for every component and we can restrict ourselves to the component containing $r$.  Our proof is based on the depth-first search algorithm (DFS); we recall that this algorithm starts at a fixed root vertex and, at each step, moves to an (arbitrary) unvisited neighbour of the current vertex. If no such neighbour is available it backtracks to the last visited vertex which has an unvisited neighbour and stops once all vertices have been visited. See e.g.~\cite{cormen2009algorithms} for an introduction of this algorithm.

We start a ``backwards DFS'' from $r$ as follows. We use edges in the opposite direction (from target to source) and whenever we have multiple edges to choose from we take the heaviest edge first, breaking ties arbitrarily. Let $u$ be the last vertex visited by the DFS in the component of $r$. Let $P$ be the path $u - r$ in the DFS tree. Then $P$ has weight at least 1.
This can be easily seen as follows.

We first want to argue that the DFS reached all vertices. For this, we need to argue that from every vertex we can find a path to $r$. Note that if $G$ is not Eulerian, it can easily be made Eulerian by adding an edge from $r$ to $s$. This (multi-)graph is strongly connected. But as the edge from $r$ to $s$ is not contained in any path to $r$ we know that in $G$ every vertex has a path to $r$ and thus our DFS visits all vertices.

\begin{figure}[t]
\centering
\includegraphics[scale=0.8]{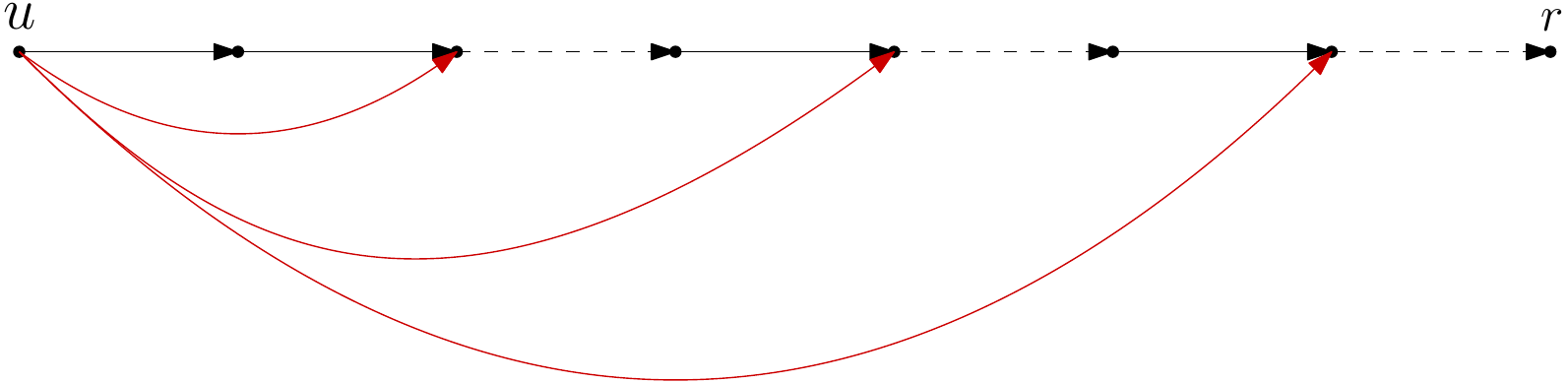}
\caption{Illustration of the $u$-$r$ path from the proof of Lemma~\ref{lemma:find_path}.}
\label{fig:1}
\end{figure}
Now we want to argue that the path $P$ has indeed weight at least one. First, note that all out-neighbours of $u$ lie on $P$. If this was not the case, then $u$ would have been visited earlier (having this out-neighbour as a predecessor). 
Then every vertex that is an out-neighbour of $u$ has the property that its in-edge used on the path is at least as heavy as its edge coming from $u$. This is because our DFS always picks the heaviest edge first. Then just adding up these edges (black edges in Figure~\ref{fig:1}) gives a weight of at least one. As edge weights are non-negative we conclude that the weight of $P$ is at least 1.

\end{proof}

The proof of the main result goes along the lines of the proof of Lemma~2.2 from \cite{KLMN21}.
\begin{proof}[Proof of Theorem~\ref{thrm:main}]

We remove the paths from $G$ in an iterative fashion to form a decreasing sequence $G=G_0\supset G_1\supset \dots$. At each step $t$, $G_t$ is either Eulerian---in particular, this is the case for $G_0 = G$---in which case we choose $s_t, r_t$ arbitrarily, or there are unique vertices $s_t, r_t$ such that $\dout_{G_t}(r_t)=\din_{G_t}(r_t)-1$, $\dout_{G_t}(s_t)=\din_{G_t}(s_t)+1$ and $\dout_{G_t}(v)=\din_{G_t}(v)$ for all $v\in V\setminus\{s_t, r_t\}$. We apply Lemma~\ref{lemma:find_path} to $G_t$ with $s = s_t, r = r_t$ and with \emph{uniform edge weighting} $\weight_{G_t}(vw) = 1 / \dout_{G_t}(v)$ for each edge $(vw) \in G_t$. Given the resulting path $P_t$, we put $G_{t+1}=G_t\setminus P_t$; since $P_t$ ends in $r_t$, we observe that Condition~\ref{lem:itm:cond1} of Lemma~\ref{lemma:find_path} is preserved for the next iteration.

Let $T$ denote the number of paths removed before $G_T$ is empty. We show by a double-counting argument on the sum $\sum_{t=0}^{T-1}{\weight_{G_t}(P_t)}$ that $T \le n ( \log d + 1 )$. 
%We let $T$ denote the total number of paths removed before $G_T$ is empty, and argue that $T \le n ( \log d + 1 )$ using a double-counting argument on the sum $\sum_{t=0}^{T-1}{\weight_{G_t}(P_t)}$.
Clearly as $P_t$ was chosen using Lemma~\ref{lemma:find_path}, it has weight $\weight_{G_t}(P_t) \ge 1$, so that sum is at least $T$. For the upper bound, we observe that the contribution of each vertex $v$ to the sum is $1 / \dout_G(v)$ the first time a path uses an out-edge of $v$, $1 / (\dout_G(v) - 1)$ the second time, and so on. Hence we have 
\begin{align*}
    T \le \sum_{t = 0}^{T-1}{ \weight_{G_t}(P_t) } = \sum_{v \in V}{ \sum_{i = 1}^{\dout_G(v)}{ 1 / i} }.
\end{align*}
Using the fact that $\sum_{i = 1}^{k}{ 1/i } \le \log k + 1$ for all $k$, and Jensen's inequality, we conclude that $G$ may be decomposed using at most $T \le n(\log d + 1)$ directed paths.

\end{proof}

\bibliographystyle{abbrv}
\bibliography{sources}
\end{document}